\documentclass[a4paper,10pt,reqno]{amsart}

\usepackage{
amssymb, amsfonts, amscd, 
mathrsfs, color, graphicx}

\usepackage[symbol]{footmisc}

\numberwithin{equation}{section}
\newtheorem{theo}{Theorem}[section]
\newtheorem{lem}[theo]{Lemma}
\newtheorem{prop}[theo]{Proposition}

\newtheorem{rem}[theo]{Remark}
\newtheorem{problem}{Problem}

\theoremstyle{definition}
\newtheorem{defi}[theo]{Definition}

\newcommand{\C}{\mathbb{C}}
\newcommand{\Z}{\mathbb{Z}}
\newcommand{\N}{\mathbb{N}}

\newcommand{\M}{\mathcal{M}_N}

\newcommand{\F}{\mathcal{F}}
\newcommand{\Hil}{\mathcal{H}}
\newcommand{\Ww}{\mathcal{W}}
\newcommand{\Vv}{\mathcal{V}}





%

\title{Periodic source detection in Discrete dynamical systems via space-time sampling}

\author{A. Aldroubi, C. Cabrelli, U. Molter}

\date{\today}

\subjclass[2020]{41A65, 43A70} 

\keywords{}

\begin{document}

\allowdisplaybreaks[2]

\begin{abstract}
In this paper, we examine a discrete dynamical system defined by $x(n+1) = A x(n) + w(n)$, where $x$ takes values  in a Hilbert space $\Hil$ and $w$ is a periodic source with values in a fixed closed subspace $\Ww$ of  $\Hil$. 

Our goal is to identify conditions on some  spatial sampling system $G = \{g_j\}_{j \in J}$ of $\Hil$   that enable stable recovery of the unknown source term $w$  from space-time samples $\{\langle x(n), g_j\rangle\}_{n \geq 0, j \in J}$. We provide necessary and sufficient conditions on $G = \{g_j\}_{j \in J}$ to ensure stable recovery of any $w \in \Ww$. Additionally, we explicitly construct an operator $R$, dependent on $G$, such that $R\{\langle x(n), g_j\rangle\}_{n, j} = w$. 
	 
\end{abstract}

\thanks{Akram Aldroubi is supported in part by NSF Grant NSF/DMS-2208030. Carlos Cabrelli and Ursula Molter are partially supported by the Global Scholars award 2023, and Grants
PICT 2022-4875 (ANPCyT), 
PIP 202287/22 (CONICET), and 
UBACyT 20020220300154BA (UBA). }

\maketitle

%
%
%
\section{Introduction}

In this paper, we study the problem of recovering a periodic source driving a dynamical system from space-time samples. The setting is described by a discrete dynamical system in a Hilbert space $\mathcal{H}$ given by
\begin{equation}\label{DSPerSo}
x(n+1) = A x(n) + w(n),
\end{equation}
where $w: \mathbb{N} \rightarrow \mathcal{W}$, is such that $w(n+N) = w(n)$ for all $n$, and where $\mathcal{W}$ is a fixed closed subspace of $\mathcal{H}$. The variable $n \in \mathbb{N}$ represents discrete time, and the operator $A$ in Equation \eqref{DSPerSo} is assumed to be a bounded linear operator on $\mathcal{H}$. In some applications this operator  may correspond to a physical phenomenon modeling a space-time signal evolving in time under the action of the operator $A$ and driven by the periodic source $w$. The problem we wish to solve is to find the unknown periodic source $w$ from space-time samples of the function $x$, given by
\begin{equation}\label{STSamp}
y_{n,j} = \langle x(n), g_j \rangle, \quad n \in \mathbb{N}, j \in J,
\end{equation}
where $G = \{g_j\}_{j \in J} \subset \mathcal{H}$ is a countable set representing spatial test functions for probing $x(n)$ at time $n$. Specifically, we want to determine conditions on the set $G$ so that we can recover $w$ from the collected samples $\{y_{n,j}\}$ such that the recovery operator $R$ is stable under perturbations of $\{y_{n,j}\}$. This means that the operator $R$ should be a continuous operator under appropriate norms (to be defined later).

This problem was inspired by environmental monitoring for the detection of pollutants emanating from smoke stacks (see e.g., \cite{RDCV12}). The solution to this problem may also be useful in other applications such as epidemiology, network analysis, computational biology, and further applications.


The type of inverse problem we are studying falls into the so-called Dynamical Sampling problems, in which an unknown of interest in a dynamical system is to be determined from space-time samples of a time-evolving signal $x$. Dynamical sampling problems vary depending on which unknown of interest is to be found. When the unknown of interest is the initial distribution $x(0)$, it is called the space-time trade-off (see e.g., \cite{ACCMP17, ACMT17, AGHJKR21,  CMPP20, cabrelli2021multi,  DMM21, MMM21, ZLL17, Zlo22}). If the unknown is the operator $A$ or some parameters related to $A$, the problem is referred to as system identification \cite{AHKLLV18, AK16, CT22, Tan17}. The problem of interest in this paper falls into the category of source recovery problems (see e.g., \cite{ADM24, ADGMM23, AGK23, aldroubi2023dynamical, AGK23, AHKK23} and the references therein).

\subsection{Notation}
Throughout the paper, $\Hil$  will denote  a separable Hilbert space. $\Hil^N$ will denote the Hilbert space $\Hil\times\cdots\times\Hil$ with the natural inner product induced by $\Hil$. In particular, $\|h\|^2_{\Hil^N}=\sum\limits_{i=1}^N\|h_i\|^2_\Hil$ for $h \in \Hil^N$.

To simplify some of the notation, we  introduce the following operators: 

 Let $A:\Hil \rightarrow \Hil$ be a bounded invertible operator with operator norm $\|A\| < 1$, and an integer $N\ge1$,
we define $N$ invertible operators
\begin{equation}\label{Ts} T_s := \left(I e^{2 \pi i s/N}-A\right)^{-1}, \quad \text{for each}\ 0\leq s\leq N-1.
\end{equation}
Here $I$ denotes the Identity operator in $\Hil$.
For the dynamical system \eqref {DSPerSo} the sequence $w$ will be called the {\em source} term, and $N$  its period.

We now introduce two Banach spaces $\mathcal{B}(\ell^2, \ell^\infty)$, and $\M$ which we will need for out work.

\begin{defi} \label{Bsp} The space $\mathcal{B}(\ell^2, \ell^\infty)$ is the set of double infinite matrices 

$D = [d_{i,j}]_{ i,j \ge 1}$ such that each row $r_i$ of $D$ belongs to $\ell^2$, and $\sup\limits_{i \ge 1} \|r_i\|_{\ell^2} < \infty$. We endow $\mathcal{B}(\ell^2, \ell^\infty)$ with the norm $\|D\|_{\ell^2\to\ell^\infty} = \sup\limits_{i \ge 1} \|r_i\|_{\ell^2}$.
\end{defi}

The space $\mathcal{B}(\ell^2, \ell^\infty)$ is a Banach space and can be identified with the space of bounded linear operators from $\ell^2$ to $\ell^\infty$, endowed with the operator norm. (See \cite{ADGMM23}). We are now ready to define the subspace $\M$.

\begin{defi}\label{SpM} Let $N\ge 1$ be an integer. The space $\M$ is the set of matrices $\{D = [d_{i,j}]: i,j \ge 1\} \subset \mathcal{B}(\ell^2, \ell^\infty)$ with rows $r_i$ such that there exist $t_1,\cdots, t_N \in \ell^2$ satisfying $\lim_{k\to \infty} \|r_{Nk+s} - t_s\|_{\ell^2} = 0$. We endow $\M$ with the norm induced by $\mathcal{B}(\ell^2, \ell^\infty)$.
\end{defi}

In Appendix~\ref{SubspaceM} we prove properties of the space $\M$.



\subsection{Contributions and Organization}  The paper is organized as follows. In Section \ref{main}, we state  Theorem  \ref{MT} wich provide the necessary and sufficient conditions on the set $G=\{g_j\}_{j\in J}\subset \Hil$ for the stable recovery of the source term $w$  in \eqref{DSPerSo} from the space-time samples \eqref{STSamp}. The proof of  Theorem  \ref{MT} is presented in Section \ref{PMT}, which also provides a reconstruction operator $R$ in \eqref{algR}. The two sections in the Appendix develop tools that are used in the proof of Theorem \ref{MT}. Specifically, in Appendix \ref{fourier} we develop the Discrete Fourier Transform  for functions taking values in a general  Hilbert space, while in Appendix \ref{SubspaceM} we show that $\M$ is a Banach space, prove some of its properties and conclude that it is the natural domain of the reconstruction operator $R$.

\section{Main result} \label {main}
In this section, we provide the necessary and sufficient conditions on a set $G = \{g_j\}_{j \in J} \subset \Hil$ for solving the recovery of any $N$-periodic source term $w$ driving the system as described in Equation \eqref{DSPerSo}. Throughout the article, we will sometimes identify $N$-periodic sequences with the vector of their first $N$ elements, i.e.~we will represent an $N-$periodic sequence $a:\Z \rightarrow \Hil$ by the finite sequence 
$(a(0),...,a(N-1)) \in \Hil^N.$

We begin by  defining  the general problem:

\begin{problem}\label{prob: general}
Let $A: \Hil \to \Hil$ be a bounded linear operator with  $\|A\| < 1$. Let $\Ww$ be a closed subspace of $\Hil$. Consider the dynamical system describing the evolution of $x(n) \in \Hil$ via
\begin{equation}\label{DS}
    \begin{cases}
    x(n+1) = A x(n) + w(n), & \text{subject to } w(n) \in \Ww,\\
    y_{n,j} = \langle x(n), g_j \rangle, & \text{for } j \in J,
    \end{cases}
\end{equation}
where the initial state $x(0) = x_0$ is unknown, and $w(n+N) = w(n)$ is an unknown $N$-periodic sequence in $\Ww$.
What are the conditions on $G = \{g_j\}_{j \in J}\subset \Hil$ such that $w$ can be recovered from $\{y_{n,j}\}_{n \geq 0, j \in J}$ in a stable way? Equivalently, under what conditions on $G$ does there exist a linear bounded operator $R: \M \to \Ww^N$ such that $R(\{\langle x(n), g_j \rangle\}_{n \geq 0, j \in J}) = w$?
\end{problem}
Before we describe our results several remarks are in order.
 
\begin{rem}${}$
\begin{enumerate}

    \item Given $\Hil$, $\Ww \subset \Hil$, and $A$,  if $\{g_j\}_{j\in J}$ is a Bessel sequence in $\Hil$, each solution $x$ of  system  \eqref {DSPerSo} determines an element  $\{\langle x(n), g_j \rangle\}_{n \geq 0, j \in J}$ in $\mathcal{M}_N$, (See Definition \ref{SpM}).
     \item  Given a fixed $w\in W^N$, consider the subset of $\M$ of   all matrices  \\ $\{\langle x(n), g_j \rangle\}_{n \geq 0, j \in J}$ corresponding to  solutions $x$ of the system associated with $w$. 
 We require that the continuous operator $R$, described in Problem~\ref{prob: general}, has constant value $w$ on that set,. i.e., $R(\{\langle x(n), g_j \rangle\}_{n \geq 0, j \in J}) = w$ for all solutions $x$, independent of the unknown initial state $x(0).$
 \end{enumerate}
\end{rem}

The following theorem provides the necessary and sufficient conditions on the probing set of functions $G = \{g_j\}_{j \in J}$ for the existence of a continuous reconstruction operator $R$. One part of its proof describes a reconstruction algorithm. However, this algorithm is not unique, in general. 

Using the notation from Equation \eqref{Ts}, we present the main result.

\begin{theo} \label {MT}
Let $A:\Hil\rightarrow \Hil$ be a bounded operator with $\|A\| < 1$, N a positive  integer,  and $\Ww$ a closed subspace of $\Hil$.

Fix $A, N$ and $\Ww$.
The following are equivalent:
\begin{enumerate}
\item There exists a bounded operator $ {R}:\M\rightarrow \Ww^N$ such that for each solution $x$ of the system in $\eqref {DS}$ it holds that $ {R}(\{\langle x(n),g_j \rangle\}_{n\geq 0, j\in J})=w$. 

\item The set $\{P_{\Ww} (T^*_s g_j)\}_{j}$ is a frame of $\Ww$ for $s=0, \dots, N-1.$

\item $\{P_{\Ww_s}(g_j)\}_{j\in J}$ is a frame of $\Ww_s$, for $s=0, \dots, N-1$, where $\Ww_s = T_s (\Ww)$.
\end{enumerate}
\end{theo}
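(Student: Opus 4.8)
The plan is to solve the dynamical system explicitly in the frequency domain, reduce the recovery of $w$ to $N$ decoupled static sampling problems, and then invoke the standard correspondence between frames and bounded left inverses. First I would write out the solution of \eqref{DS}: since $\|A\|<1$, iterating gives $x(n) = A^n x_0 + \sum_{k=0}^{n-1} A^{n-1-k} w(k)$, and because $w$ is $N$-periodic the ``memory'' part converges as $n\to\infty$ to an $N$-periodic sequence. Concretely, using the Discrete Fourier Transform for $\Hil$-valued sequences developed in Appendix~\ref{fourier}, the $N$-periodic steady state $\tilde x$ satisfies $\wh{\tilde x}(s) = T_s\, \wh w(s)$ for $s=0,\dots,N-1$, where $T_s = (Ie^{2\pi i s/N}-A)^{-1}$ as in \eqref{Ts}; the transient $A^n x_0$ decays to $0$. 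Taking inner products with $g_j$, the tail of the sample sequence $y_{n,j}$ ``sees'' only $\tilde x$, so $\{y_{n,j}\}$ lies in $\M$ with limit vectors $t_s$ along the residue class $s$ determined by $\{\langle \tilde x(n), g_j\rangle\}$, and extracting these limits (a bounded operation on $\M$ by the properties proved in Appendix~\ref{SubspaceM}) isolates, for each fixed $s$, the data $\{\langle \wh{\tilde x}(s), g_j\rangle\}_j = \{\langle T_s\wh w(s), g_j\rangle\}_j = \{\langle \wh w(s), T_s^* g_j\rangle\}_j$.

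Next I would observe that $\wh w(s)\in\Ww$ for every $s$ (the DFT is a finite linear combination of the $w(k)\in\Ww$, and $\Ww$ is closed hence a subspace), so $\langle \wh w(s), T_s^* g_j\rangle = \langle \wh w(s), P_\Ww T_s^* g_j\rangle$. Thus recovering $\wh w(s)\in\Ww$ in a stable way from $\{\langle \wh w(s), P_\Ww T_s^* g_j\rangle\}_j$, uniformly over all of $\Ww$, is exactly the statement that $\{P_\Ww(T_s^* g_j)\}_j$ admits a bounded left inverse as an analysis operator on $\Ww$, which is equivalent to $\{P_\Ww(T_s^* g_j)\}_j$ being a frame of $\Ww$. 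Doing this for each $s=0,\dots,N-1$ and then applying the inverse DFT (a bounded invertible map on $(\Hil)^N$) recovers $w=(w(0),\dots,w(N-1))$. This gives $(2)\Rightarrow(1)$, and composing the bounded maps $\M\to\{\text{limit vectors}\}\to\{\wh w(s)\}_s\to w$ yields the explicit reconstruction operator $R$ of \eqref{algR}. For $(1)\Rightarrow(2)$ I would run the argument in reverse: given a bounded $R$, fix $s$ and feed in solutions whose only nonzero Fourier mode is at frequency $s$ (equivalently, choose $x_0$ and a source supported on one frequency), so that stability of $R$ forces the lower frame bound for $\{P_\Ww(T_s^*g_j)\}_j$; the upper (Bessel) bound comes for free from boundedness of the $T_s^*$ and the fact that $\{y_{n,j}\}\in\M\subset\B(\ell^2,\ell^\infty)$ constrains $G$ to be Bessel on the relevant subspaces.

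Finally, the equivalence $(2)\Leftrightarrow(3)$ is a short frame-theoretic computation: for $h\in\Ww$ and $s$ fixed, $\langle h, P_\Ww T_s^* g_j\rangle = \langle h, T_s^* g_j\rangle = \langle T_s h, g_j\rangle = \langle T_s h, P_{\Ww_s} g_j\rangle$ since $T_s h\in\Ww_s:=T_s(\Ww)$; because $T_s:\Ww\to\Ww_s$ is a bounded bijection with bounded inverse, $\{P_\Ww(T_s^*g_j)\}_j$ is a frame of $\Ww$ if and only if $\{P_{\Ww_s}(g_j)\}_j$ is a frame of $\Ww_s$, with frame bounds distorted by $\|T_s\|^{\pm 1}$. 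I expect the main obstacle to be the bookkeeping at the interface with $\M$: one must check carefully that passing from the full sample matrix to the per-frequency limit vectors $t_s$ is well-defined and bounded (that $\{y_{n,j}\}$ genuinely converges along residue classes at a rate controlled by $\|A\|^n$, uniformly in $j$), and that the transient $A^nx_0$ does not interfere — this is where the closedness of $\Ww$, the invertibility of the $T_s$, and the Banach-space structure of $\M$ from the appendices all have to be combined cleanly, and it is also the step where stability (as opposed to mere injectivity) is genuinely used.
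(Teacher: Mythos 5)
Your proposal follows essentially the same route as the paper: reduce to the unique $N$-periodic steady state via the $\Hil$-valued DFT (so $\widehat{x_p}(s)=T_s\widehat w(s)$ and the transient $A^n c$ dies out), extract the limits along residue classes (a bounded operation on $\M$ by the appendix), recover $\widehat w(s)$ through dual frames of $\{P_{\Ww}(T_s^*g_j)\}_j$ and invert $T_s$, prove necessity by feeding single-frequency sources into the bounded operator $R$ to get the lower frame bound (upper bound from Besselness), and obtain $(2)\Leftrightarrow(3)$ from $\langle h,P_{\Ww}T_s^*g_j\rangle=\langle T_s h,P_{\Ww_s}g_j\rangle$ together with invertibility of $T_s$ --- exactly the paper's Lemma on frames under invertible maps. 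The only (cosmetic) slip is that the residue-class limits yield $\{\langle x_p(r),g_j\rangle\}_j$, not $\{\langle\widehat{x_p}(s),g_j\rangle\}_j$ directly, so one must interpose the finite, bounded time-DFT before applying $T_s^{-1}$, which is precisely the paper's composition $R=\F_N^{-1}U\F_N Q$.
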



\section {Proofs} \label {PMT}
Before proving Theorem \ref {MT}, we start with some preliminaries. 

\subsection{Periodic solution related to the periodic source} \label {alg} 
 In this subsection,  we prove that there exists an unique periodic solution $x_p$ to the dynamical system in Equation \ref{DS}.
 
A sequence $w$ with values in $\Ww$ satisfying  
\begin{equation*} \label{w-defi}
w(n+N) = w(n), \quad \text {for all } n \in \N,
\end{equation*}
can be expressed as 
 \begin{equation}\label{w-per}  w(n)=\frac{1}{\sqrt{N}}\sum_{k=0}^{N-1} \widehat{w}(k) \, e^{2\pi i k n/N}.
\end{equation}

The next Lemma proves the existence and uniqueness of a periodic solution to \eqref {DS} and gives the relation between the unique periodic solution $x_p$ of \eqref {DS} in terms of the source $w$.
\begin{lem}\label{perio}
Let $\Hil$ be a separable Hilbert space, let $\Ww \subset \Hil$ be a subspace, let $A:\Hil \to \Hil$ be a bounded operator with $\|A\|<1$, and let $w: \Z \rightarrow \Ww$, be $N$-periodic. Then there exists a unique periodic solution $x_p$ to \eqref {DS}. Moreover, the Fourier coefficients $\widehat{x_p}(k) \in \Hil$ of the $N$-periodic solution to \eqref{DS}, i.e.,
\begin{equation} \label {FXp} 
x_p(n) := \frac{1}{\sqrt N}\sum_{k=0}^{N-1} \widehat{x_p}(k)\,e^{2\pi i k n/N}
\end{equation} 
 are related to the Fourier coefficients of the periodic source $\widehat{w}(k)$ in \eqref {w-per} by the identities
$$\widehat{x_p}(k)  \left(I\,e^{2\pi i k /N} - A\right) = \widehat{w}(k), \quad k=0,\cdots,N-1,$$
 where $I$ denotes the identity operator on $\Hil$. Equivalently, using the notation introduced in equation~\eqref {Ts},
 $$\widehat{x_p}(k) = T_k \widehat{w}(k),\quad k = 0, \dots, N-1.$$
\end{lem}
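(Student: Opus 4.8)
The plan is to pass to the Fourier side and decouple the recurrence into $N$ independent equations, one for each frequency, using the Discrete Fourier Transform for $\Hil$-valued periodic sequences developed in Appendix~\ref{fourier}. First I would record that each $T_k$ in \eqref{Ts} is a well-defined bounded operator on $\Hil$: since $|e^{2\pi i k/N}| = 1$ and $\|A\| < 1$, we have $I e^{2\pi i k/N} - A = e^{2\pi i k/N}\bigl(I - e^{-2\pi i k/N}A\bigr)$, and $I - e^{-2\pi i k/N}A$ is boundedly invertible by the Neumann series because $\|e^{-2\pi i k/N}A\| = \|A\| < 1$.

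For uniqueness together with the Fourier identities, suppose $x_p$ is an $N$-periodic solution of \eqref{DS} and expand both $x_p$ and $w$ as in \eqref{FXp} and \eqref{w-per}. Shifting the index $n \mapsto n+1$ multiplies the $k$-th Fourier coefficient by $e^{2\pi i k/N}$, so the $k$-th Fourier coefficient of $n \mapsto x_p(n+1)$ is $e^{2\pi i k/N}\,\widehat{x_p}(k)$, while, $A$ being bounded and linear and the sum finite, the $k$-th Fourier coefficient of $n \mapsto A x_p(n) + w(n)$ is $A\,\widehat{x_p}(k) + \widehat{w}(k)$. Combining the recurrence with the injectivity of the $\Hil$-valued DFT (Appendix~\ref{fourier}) yields
$$e^{2\pi i k/N}\,\widehat{x_p}(k) = A\,\widehat{x_p}(k) + \widehat{w}(k), \qquad k = 0,\dots,N-1,$$
that is, $\bigl(I e^{2\pi i k/N} - A\bigr)\widehat{x_p}(k) = \widehat{w}(k)$, hence $\widehat{x_p}(k) = T_k \widehat{w}(k)$. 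In particular the Fourier coefficients of any $N$-periodic solution are uniquely determined, so there is at most one.

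For existence I would run the computation backwards: set $\widehat{x_p}(k) := T_k \widehat{w}(k)$ for $k = 0,\dots,N-1$, let $x_p$ be the $N$-periodic $\Hil$-valued sequence with these Fourier coefficients (formula \eqref{FXp}), and check that the $k$-th Fourier coefficients of $n\mapsto x_p(n+1)$ and of $n \mapsto A x_p(n) + w(n)$ agree by the defining relation of $T_k$; since two $N$-periodic sequences with equal Fourier coefficients are equal, $x_p$ solves \eqref{DS}. I do not expect a genuine obstacle: the only points needing care are the injectivity and shift rule for the $\Hil$-valued DFT, which are exactly what Appendix~\ref{fourier} supplies, and the routine interchange of the bounded operator $A$ with a finite Fourier sum. (Alternatively one could avoid Fourier analysis by exhibiting the periodic solution as the norm-convergent series $x_p(n) = \sum_{j\ge 1} A^{j-1} w(n-j)$—convergent since $\|A\|<1$ and $w$ is bounded—and proving uniqueness from the fact that the difference $d$ of two periodic solutions satisfies $d(n) = A^n d(0)$ and is bounded, forcing $d \equiv 0$; but the Fourier route delivers the stated coefficient identities directly.)
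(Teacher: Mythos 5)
Your proposal is correct and follows essentially the same route as the paper: expand $x_p$ and $w$ in the $\Hil$-valued DFT of Appendix~\ref{fourier}, match Fourier coefficients in the recurrence to get $\bigl(I e^{2\pi i k/N}-A\bigr)\widehat{x_p}(k)=\widehat{w}(k)$, and invoke the invertibility of $T_k$ (from $\|A\|<1$) for existence and uniqueness. Your explicit "run the computation backwards" step for existence just spells out what the paper states tersely, so there is no substantive difference.
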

\begin{proof}
Assume $\{x_p(n)\} \subset \Hil$ is an $N$-periodic sequence that satisfies \eqref{DS}. Then, using \eqref {FXp}, we have 
\begin{align*}
x_p(n+1) &= \frac{1}{\sqrt N}\sum_{k=0}^{N-1} \widehat{x_p}(k)\,e^{2\pi i k (n+1)/N} \\
& = A\left(\frac{1}{\sqrt N}\sum_{k=0}^{N-1} \widehat{x_p}(k)\,e^{2\pi i k n/N}\right) + \frac{1}{\sqrt{N}}\sum_{k=0}^{N-1} \widehat{w}(k) \, e^{2\pi i k n/N}.
\end{align*}
Thus,
\begin{equation*}
\frac{1}{\sqrt N}\sum_{k=0}^{N-1} \left(\widehat{x_p}(k)\,e^{2\pi i k/N}\right)e^{2\pi i k n/N} = 
A\left(\frac{1}{\sqrt N}\sum_{k=0}^{N-1} \widehat{x_p}(k)\,e^{2\pi i k n/N}\right) + \frac{1}{\sqrt{N}}\sum_{k=0}^{N-1} \widehat{w}(k) \, e^{2\pi i k n/N}.
\end{equation*}
Finally, it follows that
\begin{equation*}
\frac{1}{\sqrt N}\sum_{k=0}^{N-1} \widehat{x_p}(k)\left( I\,e^{2\pi i k/N} -
A \right)\,e^{2\pi i k n/N}  = \frac{1}{\sqrt{N}}\sum_{k=0}^{N-1} \widehat{w}(k) \, e^{2\pi i k n/N},
\end{equation*}
which, by the unicity of the Fourier coefficients yields
\begin{equation} \label{per-coef}
 \widehat{x_p}(k)\left(I\,e^{2\pi i k/N} -
A  \right) = \widehat{w}(k), \ k= 0, \dots, N-1.
\end{equation}
Since $\|A\|<1$, $T_k$ is an invertible bounded operator on $\Hil$ for each $k=0,\cdots N-1$. Hence, the expression above also guarantees the existence and uniqueness of a periodic solution $x_p$. 
\end{proof}

\subsection{General solution of the difference equation with a periodic source}

 In this subsection, we obtain the relation between the periodic solution $x_p$ and the periodic source  term $w$, using the discrete  Fourier transform in Hilbert spaces described in Appendix~\ref{fourier}.

From the theory of difference equations (see for example \cite{KM-2002}), the solution of \eqref{DS}, for the initial condition $x(0) = x_0$ is given by
\begin{equation}\label{general}
x(n) = A^nx_0 + \sum_{k=0}^{n-1} A^{n-1-k}w(k),\ n=0, \dots
\end{equation}
Assuming that $\|A\| <1$, and $w(n)$ is $N$-periodic  we re-write \eqref{general} as
\begin{align}\label{gen-per}
x(n) & = A^nx_0 + \sum_{k=-\infty}^{n-1} A^{n-1-k}w(k) - \sum_{k=-\infty}^{-1} A^{n-1-k}w(k) \notag\\
 & = A^n\left(x_0 - \sum_{k=-\infty}^{-1} A^{-1-k}w(k)\right)  +\sum_{k=-\infty}^{n-1} A^{n-1-k}w(k)\notag\\
 & = A^n c +\sum_{k=-\infty}^n A^{n-k}w(k).
 \end{align}
 Here $c \in \Hil$ is defined by $c = x_0 -  \sum_{k=1}^{\infty} A^{k}w(-k-1)$ which converges by our assumption that $\|A\|<1$.
Let $v(n)$ be defined by 
\begin{equation}
v(n) = \sum_{k=-\infty}^{n} A^{n-k}w(k).
\end{equation} 
Then 
\begin{align}
v(n+N) &= \sum_{k=-\infty}^{n+N} A^{n+N-k}w(k)\notag\\
&= \sum_{\ell =-\infty}^{n} A^{n-\ell}w(\ell+N) = \sum_{\ell =-\infty}^{n} A^{n-\ell}w(\ell) = v(n).
\end{align}
Hence, $\{v(n)\}$ is an $N$-periodic sequence in $\Hil$. In addition, 
\begin{align}
v(n+1) & = \sum_{k=-\infty}^{n+1} A^{n+1-k}w(k) \\
& = \sum_{k=-\infty}^{n} A^{n+1-k}w(k) + w(n) \\
& = A\left(\sum_{k=-\infty}^{n} A^{n-k}w(k)\right) + w(n) \\
& = Av(n) + w(n).
\end{align}
Therefore $\{v(n)\}$ is an $N$-periodic sequence in $\Hil$ that satisfies \eqref{DS}.

By Lemma~\ref{perio},  we obtain 
\begin{align*}
v(n) = x_p(n) & = \sum_{k=0}^{N-1} \widehat{x_p}(k)\,e^{2\pi i k n/N} \\
& \sum_{k=0}^{N-1} \left(I\,e^{2\pi i k n/N} - A\right)^{-1}\widehat{w}(k) \,e^{2\pi i k n/N}.
\end{align*}

Therefore, the  solution for \eqref{DS} with $x(0) = x_0$ is
\begin{equation} \label {GS}x(n) = A^n c + x_p(n), 
\end{equation}
where $c = x_0 -  \sum_{k=1}^{\infty} A^{k}w(-k-1)$ and $x_p(n) = \sum_{k=0}^{N-1} \left(I\,e^{2\pi i k n/N} - A\right)^{-1}\widehat{w}(k) \,e^{2\pi i k n/N}$.

With the notation from \eqref{Ts}, the periodic solution to \eqref{DS}  can be expressed as
$$ x_p(n) = \sum_{k=0}^{N-1} T_k\widehat{w}(k) \,e^{2\pi i k n/N}. $$

\subsection{Sufficient conditions for the stable recovery of the source $w$}
\ 

In this section, we provide sufficient conditions on the set $G = \{g_j\}_{j \in J}$ for the stable recovery of any sequence $w$ taking values in  $\mathcal{W}$ from the space-time samples $\{\langle x(n), g_j \rangle\}_{n \geq 0, j \in J}$. In doing so, we also develop an algorithm for the recovery of $w$. It turns out that these sufficient conditions are also necessary, as we will prove in the next section. However, the algorithm we describe is not unique; there are many algorithms that can achieve stable recovery as long as the conditions on $G$ are met.
The theorem providing these sufficient condition is given below.
\begin{theo}\label{ida}
Consider the dynamical system \eqref {DS} and let $\{g_j\}_{j\in J} \
\subset \Hil$ be a Bessel sequence.  Then, the periodic source term $w$ can be recovered from the measurements $\{\langle x(n), g_j \rangle\}_{n\geq 0, j\in J}$
 provided that $P_{\Ww_s}\{g_j\}_{j\in J}$ is a frame of $\Ww_s= T_s (\Ww)$, for $s=0, \dots, N-1$ where $T_s$ is as in \eqref {Ts}.
\end{theo}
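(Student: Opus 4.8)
The plan is to pass to the frequency domain via the discrete Fourier transform developed in Appendix~\ref{fourier}, reduce the recovery of $w$ to the recovery of each Fourier coefficient $\widehat{w}(s)$, and then exhibit an explicit bounded left-inverse built from the dual frame of $\{P_{\Ww_s}(g_j)\}_{j\in J}$. First I would observe that, by \eqref{GS}, $x(n) = A^n c + x_p(n)$, so $\langle x(n), g_j\rangle = \langle A^n c, g_j\rangle + \langle x_p(n), g_j\rangle$, and since $\|A\|<1$ the term $\langle A^n c, g_j\rangle = \langle c, (A^*)^n g_j\rangle$ decays geometrically in $n$; this is precisely what places the sample matrix in $\M$ (the transient part goes to $0$, the periodic part $\langle x_p(n), g_j\rangle$ supplies the limiting vectors $t_1,\dots,t_N$ along residue classes mod $N$). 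So the strategy is to annihilate the transient: take the sample sequence $\{y_{n,j}\}_{n}$ for fixed $j$, and extract its ``periodic part'' $\langle x_p(n), g_j\rangle$ as the limit of $y_{Nk+s,j}$ as $k\to\infty$ (equivalently a Cesàro-type average), which depends continuously on the data in the $\M$-norm.

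Next, from $x_p(n) = \sum_{k=0}^{N-1} T_k\widehat{w}(k)\, e^{2\pi i k n/N}$ I would apply the (finite) DFT in $n$ to the periodic sequence $\{\langle x_p(n), g_j\rangle\}_n$ to isolate, for each $s=0,\dots,N-1$,
\begin{equation*}
\frac{1}{\sqrt N}\sum_{n=0}^{N-1} \langle x_p(n), g_j\rangle\, e^{-2\pi i s n/N} = \langle T_s\widehat{w}(s), g_j\rangle = \langle \widehat{w}(s), T_s^* g_j\rangle.
\end{equation*}
Because $\widehat{w}(s)\in\Ww$ (as $w$ takes values in $\Ww$, its Fourier coefficients do too), the right-hand side equals $\langle \widehat{w}(s), P_{\Ww}(T_s^* g_j)\rangle$. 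Now I invoke the hypothesis: $\{P_{\Ww_s}(g_j)\}_{j\in J}$ is a frame of $\Ww_s = T_s(\Ww)$; one checks this is equivalent (via the bounded invertible map $T_s:\Ww\to\Ww_s$ and its adjoint) to $\{P_{\Ww}(T_s^* g_j)\}_{j\in J}$ being a frame of $\Ww$ — this equivalence is exactly the content linking conditions (2) and (3) of Theorem~\ref{MT} and I would state it as a small lemma. Hence the coefficient sequence $\{\langle \widehat{w}(s), P_{\Ww}(T_s^* g_j)\rangle\}_{j\in J}$ determines $\widehat{w}(s)$ stably: applying the canonical dual frame $\{\widetilde{h}_j^{(s)}\}$ gives $\widehat{w}(s) = \sum_{j\in J} \langle \widehat{w}(s), P_{\Ww}(T_s^* g_j)\rangle\, \widetilde{h}_j^{(s)}$, with norm control by the lower frame bound. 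Finally I reassemble $w$ from $\widehat{w}(0),\dots,\widehat{w}(N-1)$ via \eqref{w-per}; composing the three bounded steps (extract periodic part on each residue class $\to$ finite DFT in $n$ $\to$ dual-frame synthesis) yields the bounded operator $R:\M\to\Ww^N$, and the formula for $R$ is the reconstruction algorithm referenced in \eqref{algR}.

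The main obstacle I anticipate is the first step — rigorously extracting the periodic part $\langle x_p(n), g_j\rangle$ from $\{y_{n,j}\}$ as a \emph{bounded} operation on the space $\M$ (not merely a pointwise limit): one must show the map sending a matrix $D\in\M$ to its tuple of limiting rows $(t_0,\dots,t_{N-1})$ is well-defined and continuous in the $\ell^2\to\ell^\infty$ norm, and that the whole limit vector is hit by the actual data (i.e.\ the interplay between the geometric decay of $(A^*)^n$ and the Bessel bound of $G$ gives the required uniformity in $j$). The properties of $\M$ proved in Appendix~\ref{SubspaceM} should furnish exactly this; once that projection-onto-periodic-part operator is in hand, the rest is a finite-dimensional DFT computation plus standard frame theory, all manifestly bounded.
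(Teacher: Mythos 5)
Your proposal is correct and follows essentially the paper's own argument: split $x(n)=A^nc+x_p(n)$, eliminate the transient by taking limits of the samples along residue classes mod $N$ (using $\|A\|<1$), and then recover $\widehat{w}(s)$ from the relation $\widehat{x_p}(s)=T_s\widehat{w}(s)$ together with the frame hypothesis and a dual frame, the equivalence between the $\Ww_s$-frame condition and the $\{P_{\Ww}(T_s^*g_j)\}_j$-frame condition being exactly the paper's Lemma~\ref{equiv2-3}. The only cosmetic difference is that you apply the DFT in $n$ first and synthesize $\widehat{w}(s)$ in $\Ww$ via a dual frame of $\{P_{\Ww}(T_s^*g_j)\}_j$, while the paper reconstructs the periodic solution in $\Ww_s$ and then applies $T_s^{-1}$; the boundedness/continuity issue you flag for extracting the limiting rows is precisely what Lemma~\ref{LimConv} in Appendix~\ref{SubspaceM} provides.
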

\begin{proof}

From~\eqref{GS}, the general solution of  \eqref{DS} is of the form
$$ x(n) = A^n c +x_p(n) $$ where $x_p$ is the unique $N$-periodic solution to system \eqref {DS} for $N$-periodic  source $w$. 

Set $n=Nk+r$ with $0\leq r \leq N-1$. Thus,

$$\langle x(Nk+r) , g_j \rangle = \langle A^{Nk} A^rc ,g_j\rangle + \langle x_p(r), g_j \rangle.$$
The first term on the right hand side can be bounded above by
$$ \vert \langle A^{Nk} A^rc ,g_j\rangle \vert \le \|A^{Nk} A^rc\|\|g_j\|\le \|A\|^{Nk} \|A^rc\| \|g_j\|.$$
Since $\|A\|<1$, it follows that $\lim_{k \rightarrow \infty}\langle A^{Nk} A^rc ,g_j\rangle =0$.
We conclude that 
 \begin {equation} \label{lim_meas}
\lim_{k \rightarrow \infty} \langle x(Nk+r) , g_j \rangle = \lim_{k \rightarrow \infty}  \langle A^{Nk} A^rc ,g_j\rangle +    \langle x_p(r), g_j \rangle =   \langle x_p(r), g_j \rangle.
\end{equation}

In what follows, we will show how, under the right conditions of $G=\{g_j\}$, we can obtain $w$ from  $\{\langle x_p(r), g_j \rangle\}_{j\in J}$.

For each $s=0,\cdots, N-1, \;\widehat{x_p}(s) = T_s \widehat{w}(s)$. Since 
$\widehat{w}(s)$ is a linear combination of the vectors $w(r), r=0,\dots,N-1$ belonging to $\Ww$, we
conclude  that $\widehat{x_p}(s) \in T_s (\Ww)$,  and  that $x_p(k) \in T_s (\Ww)$ (see Appendix A).

Let $\Ww_s$ be the closed subspace of $\Hil$, defined as $\Ww_s :=T_s (\Ww)$.
Assume that  $P_{\Ww_s}\{g_j\}_{j\in J}$ is a frame of $\Ww_s$ for $s=0, \dots, N-1$ (or equivalently, that $\{P_{\Ww} (T^*_sg_j)\}_{j}$ is a frame of $\Ww$ for $s=0, \dots, N-1$), and let $\{f_j^s\}_{j\in J}$  be one of its a dual frames in $\Ww_s$.
Then, by the reconstruction formula for frames,  we can recover $x_p(s)$ from 
$$x_p(s) = \sum_{j\in J} \langle x_p(s), g_j \rangle  f_j^s.$$
 The relation $\widehat{x_p}(s) = T_s \widehat{w}(s)$ is then used to recover   $\widehat{w}(s)$, $s=0,\cdots,N-1$  and hence $w(k)$, $k=0,\cdots,N-1$.
\end{proof}

Note that the reconstruction illustrated above implies for example the choice of a dual frame for $\{T^*_sg_j\}_{j}$ for $s=0, \dots, N-1$, which is, in general, not unique, showing that our reconstruction algorithm is not unique. However, 
we will show next,  that the  condition on the  set $G=\{g_j\}_{j\in J}$ in theorem \ref {ida} turns out to be necessary for the existence of a continuous reconstruction operator $R$, as described in the next section. 



\subsection{Proof of necessity}

 In the next proposition we  state that the frame condition in Theorem \ref{ida}, is necessary for the existence of a continuous linear operator $\widetilde{R}$ from  the Banach space $\M$ to $\Ww^N$ such that $\widetilde{R}(\{\langle x(n),g_j \rangle\}_{n\geq 0, j\in J})=\widehat{w}$ and hence an operator  $R:=\F_N^{-1}\circ \widetilde{R} : \M \to \Ww^N$ such that $R(\{\langle x(n),g_j \rangle\}_{n\geq 0, j\in J})={w}$.

\begin{prop}\label{neces}

Fix $A, N$ and $\Ww$. 
Assume that there exists a bounded operator $\widetilde{R}:\M\rightarrow \Ww^N$ such that for each solution $x$ of a  system in \eqref {DS}, we have  $\widetilde{R}(\{\langle x(n),g_j \rangle\}_{n\geq 0, j\in J})=\widehat{w}$, then the set $\{P_{\Ww} (T^*_s g_j)\}_{j}$ is a frame of $\Ww$ for $s=0, \dots, N-1.$
\end{prop}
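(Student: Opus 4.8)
The plan is to verify, for each fixed $s\in\{0,\dots,N-1\}$, the two frame inequalities for $\{P_{\Ww}(T_s^*g_j)\}_{j\in J}$ in $\Ww$, by feeding the operator $\widetilde{R}$ a family of particularly simple solutions of \eqref{DS}.

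First I would dispose of the \emph{upper} (Bessel) bound. The mere existence of $\widetilde{R}$ forces $\{g_j\}_{j\in J}$ to be a Bessel sequence: applying the hypothesis to the trivial source $w\equiv 0$ and an arbitrary initial state $x(0)=v\in\Hil$ yields the solution $x(n)=A^nv$, whose measurement matrix lies in $\M\subset\mathcal{B}(\ell^2,\ell^\infty)$; hence one of its rows, namely $(\langle v,g_j\rangle)_{j\in J}$, belongs to $\ell^2$, and since $v\in\Hil$ is arbitrary the analysis operator $v\mapsto(\langle v,g_j\rangle)_{j}$ is everywhere defined from $\Hil$ into $\ell^2$ and therefore bounded by the closed graph theorem; write $B$ for the square of its norm. (Alternatively, one may simply take ``$\{g_j\}$ is Bessel'' as a standing hypothesis, as in Theorem~\ref{ida}.) Then for $f\in\Ww$ we have $P_{\Ww}f=f$ and $\langle f,T_s^*g_j\rangle=\langle T_sf,g_j\rangle$, so $\sum_{j}|\langle f,P_{\Ww}(T_s^*g_j)\rangle|^2=\sum_{j}|\langle T_sf,g_j\rangle|^2\le B\|T_s\|^2\|f\|^2$, the desired upper frame bound, uniform in $s$.

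The heart of the matter is the \emph{lower} bound. Fix $s$ and an arbitrary $h\in\Ww$, and consider the single-mode periodic source $w_h(n):=\tfrac{1}{\sqrt N}\,e^{2\pi i s n/N}\,h$; it is $N$-periodic and takes values in $\Ww$ (here one uses that $\Ww$ is a complex subspace), and its only nonzero Fourier coefficient is $\widehat{w_h}(s)=h$. By Lemma~\ref{perio} the associated periodic solution is $x_p(n)=\tfrac{1}{\sqrt N}\,e^{2\pi i s n/N}\,T_sh$, and choosing the initial state $x(0)=x_p(0)$ makes $x=x_p$ the solution of \eqref{DS} for this source. Its measurement matrix has, at time $n$, the row $r_n=\tfrac{1}{\sqrt N}\,e^{2\pi i s n/N}\,(\langle T_sh,g_j\rangle)_{j\in J}$; since $|e^{2\pi i s n/N}|=1$, every $r_n$ has the same $\ell^2$-norm $\tfrac{1}{\sqrt N}\big(\sum_{j}|\langle T_sh,g_j\rangle|^2\big)^{1/2}$, so the matrix belongs to $\M$ and its $\M$-norm equals exactly this quantity. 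By hypothesis $\widetilde{R}$ maps this matrix to $\widehat{w_h}=(0,\dots,0,h,0,\dots,0)\in\Ww^N$, whose norm is $\|h\|$. Boundedness of $\widetilde{R}$ then gives $\|h\|\le\|\widetilde{R}\|\cdot\tfrac{1}{\sqrt N}\big(\sum_{j}|\langle T_sh,g_j\rangle|^2\big)^{1/2}$; squaring, and using $\langle T_sh,g_j\rangle=\langle h,P_{\Ww}(T_s^*g_j)\rangle$ (valid because $h\in\Ww$), yields $\sum_{j}|\langle h,P_{\Ww}(T_s^*g_j)\rangle|^2\ge\tfrac{N}{\|\widetilde{R}\|^2}\,\|h\|^2$. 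Since $h\in\Ww$ was arbitrary this is the lower frame bound, and together with the Bessel bound it shows $\{P_{\Ww}(T_s^*g_j)\}_{j\in J}$ is a frame of $\Ww$ for every $s\in\{0,\dots,N-1\}$.

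The argument is elementary, so there is no serious obstacle; the two points demanding care are (i) extracting the Bessel property — equivalently the upper frame bound — from the abstract hypothesis, via the closed graph step above or by declaring it a standing assumption as in Theorem~\ref{ida}; and (ii) the bookkeeping with the discrete Fourier transform normalization from Appendix~\ref{fourier} and with the row-indexing of $\M$ in Definition~\ref{SpM}, so that the $\M$-norm of the time-periodic measurement matrix is computed correctly and the output $\widehat{w_h}$ is seen to have $\Ww^N$-norm exactly $\|h\|$.
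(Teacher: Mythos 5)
Your proof is correct and follows essentially the same route as the paper: for each fixed $s$ you feed $\widetilde{R}$ the single-Fourier-mode source with $\widehat{w}(s)=h$, use Lemma~\ref{perio} to identify the periodic solution $x_p(n)=\tfrac{1}{\sqrt N}e^{2\pi i sn/N}T_sh$, and obtain the lower frame bound from the boundedness of $\widetilde{R}$ together with the $\ell^2\to\ell^\infty$ norm of the resulting measurement matrix, while the upper bound comes from the Bessel property of $\{g_j\}$ and the boundedness of $T_s$. The only difference is a welcome refinement: you extract the Bessel property from the hypothesis (via the closed graph theorem applied to the analysis operator, using that the measurement matrices of the zero-source solutions must lie in $\M$), whereas the paper's proof simply invokes a Bessel constant for $\{g_j\}$ without listing it as an explicit assumption.
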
\begin{proof}

Let $v \in \Ww$ be an arbitrary vector and $0\leq s \leq N-1.$ Define $w_s\in \Ww^N$ by $\widehat{w_s}(s) =  v$, and $\widehat{w_s}(\ell)=0$ for $\ell \not= s$.
Consider the dynamical system \eqref {DS} with this choice of $w_s$ and the unique periodic solution $x_p$ associated with this choice. Note that, from Lemma~\ref{perio} we have that $\widehat x_p(\ell) = T_s \widehat{w_s}(\ell)$, and hence $\widehat x_p(s) = T_s v$ and $\widehat x_p(\ell) = 0$ for $\ell \not= s$.
Then we have,  
\begin{align*}
\|v\|_{\Hil^N}^2  =\|\widehat{w_s}\|_{\Hil^N}^2 &= \|\widetilde{R}(\{<x_p(n),g_j>\}\|_{\Hil}^2 \\
& \leq \|\widetilde{R}\|^2 \|\{<x_p(Nk+r),g_j>\}\|_B^2\\
&=\sup_{0\leq r \le N-1}\sup_k \sum_{j\in J} |<x_p(Nk+r),g_j>|^2 \\
&=\sup_{0\leq r \le N-1} \sum_{j\in J} |<x_p(r),g_j>|^2 \\
&=\sup_{0\leq r \le N-1} \sum_{j \in J} \left|\left\langle \left(\frac{1}{\sqrt{N}} \sum_{\ell=0}^{N-1} \widehat{x_p}(\ell) e^{2 \pi i \ell r/N}\right) ,g_j \right\rangle\right|^2\\
& = \sup_{0\leq r \le N-1}\sum_{j \in J} |\frac{1}{\sqrt{N}} \sum_{\ell=0}^{N-1} e^{2 \pi i \ell r/N}\langle T_\ell \widehat{w}(\ell) ,g_j \rangle |^2 \\
&= \frac{1}{N} \sum_{j\in J} |\langle T_s v ,g_j \rangle|^2\\
& =  \frac{1}{N} \sum_{j\in J} | \langle v ,T_s^* g_j \rangle |^2.
\end{align*}

So we have that for any $v\in \Ww$, 
$$N\|v\|_{\Hil^N}^2  \leq \sum_{j\in J} | \langle v ,T_s^* g_j \rangle |^2.$$
This says that $\{P_\Ww (T_s^*g_j)\}_{j\in J}$ satisfies the lower frame inequality for $\Ww$.
For the upper frame bound we have 
$$ \sum_{j\in J} | \langle v ,T_s^* g_j \rangle |^2=  \sum_{j\in J} | \langle T_sv ,g_j \rangle |^2 \leq c_1 \|T_s^* v\|^2 \leq c_2 \|v\|^2.$$
Here $c_1$ is the Bessel constant of $\{g_j\}$.
Since this argument can be repeated for each $0\leq s \leq N-1$ this concludes the proof of the implication.
\end{proof}

\subsection{Proof of Theorem~\ref{MT}}
\ 

We first need the following Lemma:
\begin{lem}\label{equiv2-3} Let $\Hil$ be a Hilbert space,  $\Vv \subset \Hil$ be a closed subspace  and $\{g_j\}_{j\in J}$  a Bessel sequence in  $\Hil$ and let $T:\Hil \rightarrow \Hil$ be a bounded invertible operator. Then 
the set $\{P_{\Vv} T^* g_j\}_{j}$ is a frame of $\Vv$ if and only if 
$\{P_{T\Vv}(g_j)\}_{j\in J}$ is a frame of $T\Vv$.
\end{lem}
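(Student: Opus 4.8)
The plan is to exploit the duality between the analysis operators of the two candidate frames and to use the fact that $T$ is a linear isomorphism between $\Vv$ and $T\Vv$, so that frame bounds transfer up to the norm of $T$ and $T^{-1}$ restricted to the relevant subspaces. Concretely, for a fixed $v\in\Vv$ one has the identity
\begin{equation*}
\langle Tv, g_j\rangle = \langle v, T^*g_j\rangle = \langle v, P_{\Vv}T^*g_j\rangle,
\end{equation*}
the last equality because $v\in\Vv$. Hence $\sum_j |\langle v, P_{\Vv}T^*g_j\rangle|^2 = \sum_j|\langle Tv, g_j\rangle|^2 = \sum_j|\langle Tv, P_{T\Vv}g_j\rangle|^2$, where again the projection onto $T\Vv$ may be inserted since $Tv\in T\Vv$. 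So the two "frame-type" quadratic forms, evaluated respectively at $v\in\Vv$ and at $Tv\in T\Vv$, are literally \emph{equal}.

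From this identity the equivalence is essentially bookkeeping. Suppose $\{P_{\Vv}T^*g_j\}_j$ is a frame of $\Vv$ with bounds $0<\alpha\le\beta$. Given an arbitrary $u\in T\Vv$, write $u=Tv$ with $v=T^{-1}u\in\Vv$ (here $T^{-1}$ makes sense on $T\Vv$ because $T$ is invertible on $\Hil$, so $T$ maps $\Vv$ bijectively onto $T\Vv$). Then $\sum_j|\langle u, P_{T\Vv}g_j\rangle|^2 = \sum_j|\langle v, P_{\Vv}T^*g_j\rangle|^2$, which lies between $\alpha\|v\|^2$ and $\beta\|v\|^2$; and $\|v\|=\|T^{-1}u\|$ satisfies $\|T\|^{-1}\|u\|\le\|v\|\le\|T^{-1}\|\,\|u\|$. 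Combining, $\{P_{T\Vv}g_j\}_j$ is a frame of $T\Vv$ with bounds $\alpha\|T\|^{-2}$ and $\beta\|T^{-1}\|^2$. The converse direction is symmetric: replace $T$ by $T^{-1}$, noting $(T^{-1})^* = (T^*)^{-1}$ and $T^{-1}(T\Vv)=\Vv$, and observe that $\{g_j\}$ being Bessel guarantees $\{P_{T\Vv}g_j\}_j$ is Bessel so the upper bound is never an issue; one only needs to transport the lower bound. One should also record that $\{P_{\Vv}T^*g_j\}_j$ is automatically Bessel because $\{g_j\}$ is Bessel and $P_{\Vv}T^*$ is bounded, so in each direction the only substantive content is the lower frame inequality.

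I do not anticipate a real obstacle here: the only point requiring a moment's care is making sure the orthogonal projections are inserted and removed legitimately — i.e.\ that $\langle v, T^*g_j\rangle=\langle v, P_{\Vv}T^*g_j\rangle$ exactly when $v\in\Vv$, and correspondingly on the $T\Vv$ side — and that $T$ genuinely restricts to a bijection $\Vv\to T\Vv$ with controlled norms in both directions. Everything else is a direct transfer of frame bounds through a bounded invertible operator, so the write-up should be short.
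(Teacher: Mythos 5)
Your proposal is correct and follows essentially the same route as the paper: the identity $\langle v,P_{\Vv}T^*g_j\rangle=\langle Tv,g_j\rangle=\langle Tv,P_{T\Vv}g_j\rangle$ for $v\in\Vv$, transfer of the lower frame bound through the bijection $T:\Vv\to T\Vv$, and the Bessel property of $\{g_j\}$ for the upper bounds. Your constant for the transferred lower bound, $\alpha\|T\|^{-2}$, is in fact the correct one, whereas the paper's final step ``$m\|v\|^2=m\|T^{-1}\|^2\|w\|^2$'' is a slip that should read $m\|v\|^2\ge m\|T\|^{-2}\|w\|^2$.
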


\begin{proof}
For the sufficiency, let $m$ and $M$ be the lower and upper frame bounds for $\{P_{\Vv} T^* g_j\}_{j}$. Then for $v \in \Vv$,  
\begin{equation*}
 m\|v\|^2 \leq  \sum_{j \in J}| \langle v, P_{\Vv} T^*g_j \rangle|^2  = \sum_{j \in J} | \left\langle Tv ,g_j \right\rangle|^2 .
\end{equation*}
Given  $w\in T\Vv$, let $v=T^{-1}w$. We have
\begin{equation*}
\begin{split}
 \sum_{j \in J} |\left\langle w ,P_{TV}g_j \right\rangle|^2  =\sum_{j \in J} |\left\langle w ,g_j \right\rangle|^2=\sum_{j \in J} |\left\langle Tv ,g_j \right\rangle|^2 &=\sum_{j \in J} |\left\langle v ,P_{\Vv}T^*g_j \right\rangle|^2\\
 & \geq
 m\|v\|^2 =m \|T^{-1}\|^2\|w\|^2.
 \end{split}
 \end{equation*}
 The upper frame bound follows from the fact that $\{g_j\}$ is Bessel. The proof of the converse is the same.
 
\end{proof}

\begin{proof}[Proof of Theorem~\ref{MT}]
\ 

1) $\Longrightarrow$ 2)  Is simply Proposition~\ref{neces}  followed by the application of the Fourier matrix $F^*_N$ defined by \eqref{FM}.

2) $\Longrightarrow$ 1)
Let us start by defining the operator ${Q}: \M \rightarrow \Ww^N$. 

Let $\{f^r_j\}_j$ be a dual frame of $\{P_{\Ww}(T_r^*g_j\}_j$ in $\Ww$.
Given $Y= [y_{Nk+r,j}]\in \M$, define
\begin{equation} \label{algR}
Q(Y) := \left[\lim_{k\rightarrow \infty}{\sum_{j\in J} {y}_{Nk+r,j} f^r_j}\right]_{r=0,\dots,N-1}.
\end{equation}
By Lemma~\ref{LimConv}, ${Q}$ is a well defined operator which is bounded since $\{f^r_j\}_j$  is Bessel for $0\leq r \leq N-1:$
$$ \|{Q}(Y)\|_{\Hil^N} \leq C \|Y\|_{\ell^2\to \ell^\infty}.$$

Now if $x(n)$ is a solution of dynamical system \eqref {DS} with  $N$-periodic source $w$, then, by the proof of Theorem \ref{ida}, we know that $
{Q}( \{ \langle x(n), g_j \rangle\}_{n\geq 0, j\in J}) = x_p$, the unique periodic solution of \eqref{DS} with $N$-periodic source $w$.  

Using Lemma~\ref{perio} we obtain $\widehat w$, and applying the  inverse Fourier transform $\F^{-1}_N$ defined by \eqref{FM}, the result follows. So, our operator is 
$$  R := \F_N^{-1} U\F_N Q \quad\text{where}\quad U=diag(T_0^{-1}, \cdots, T_{N-1}^{-1}).$$

For $ 2) \Longleftrightarrow 3)$ see Lemma~\ref{equiv2-3}.

\end{proof}
\appendix
\section{Discrete Fourier transform in general Hilbert spaces} \label {fourier}
 
 In this section we prove the existence of the Fourier decomposition that we use in our main theorem.
 The Fourier transform can be defined for functions taking values in a general separable Hilbert space.
 Here we use the Fourier transform for periodic vector value sequences defined in the integers. Although this is a known folklore, we present it here for completeness.
 
Let $\Hil$ be a separable Hilbert space, $N \in \Z, \; N \geq 1$, and $\Z_N$ the cyclic group of $N$ elements.
\begin{prop}
Any function
$q: \Z_N \rightarrow \Hil$
has a  Fourier decomposition, i.e. there exist unique function $\widehat{q}:\Z_N \rightarrow \Hil$ such that
$$ q(n)=\frac{1}{\sqrt{N}}\sum_{k=0}^{N-1} \widehat{q}(k) \, e^{2\pi i k n/N}, \;n\in\Z_N $$
\end{prop}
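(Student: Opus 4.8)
The plan is to prove the Fourier decomposition proposition by reducing it to the scalar case via an orthonormal basis of $\Hil$, or—more cleanly—by directly verifying that the finite family of exponential functions forms an orthogonal basis of the Hilbert space $\ell^2(\Z_N, \Hil)$ of $\Hil$-valued functions on $\Z_N$. Since $\Z_N$ is finite, there are no convergence subtleties, so the whole statement is essentially linear algebra over $\Hil$.

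\begin{proof}
For $k \in \Z_N$ define $e_k : \Z_N \to \C$ by $e_k(n) = \frac{1}{\sqrt N} e^{2\pi i k n/N}$. The space of all functions $q : \Z_N \to \Hil$ is a Hilbert space under the inner product $\langle q, q' \rangle := \sum_{n=0}^{N-1} \langle q(n), q'(n)\rangle_\Hil$, and it is spanned by the $N$ functions $n \mapsto e_k(n)\, h$ with $k \in \Z_N$ and $h \in \Hil$. The standard orthogonality relation $\sum_{n=0}^{N-1} e^{2\pi i (k-k')n/N} = N\,\delta_{k,k'}$ gives, for any $h, h' \in \Hil$,
\[
\Big\langle \sum_{n} e_k(n) h\, ,\, \sum_{n} e_{k'}(n) h' \Big\rangle
= \Big(\frac{1}{N}\sum_{n=0}^{N-1} e^{2\pi i (k-k')n/N}\Big)\langle h, h'\rangle_\Hil = \delta_{k,k'}\,\langle h, h'\rangle_\Hil .
\]
Hence, given $q$, set $\widehat q(k) := \sum_{n=0}^{N-1} \overline{e_k(n)}\, q(n) = \frac{1}{\sqrt N}\sum_{n=0}^{N-1} q(n)\, e^{-2\pi i k n/N} \in \Hil$. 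Using the orthogonality relation once more,
\[
\frac{1}{\sqrt N}\sum_{k=0}^{N-1} \widehat q(k)\, e^{2\pi i k n/N}
= \frac{1}{N}\sum_{k=0}^{N-1}\sum_{m=0}^{N-1} q(m)\, e^{2\pi i k (n-m)/N}
= \sum_{m=0}^{N-1} q(m)\Big(\frac{1}{N}\sum_{k=0}^{N-1} e^{2\pi i k(n-m)/N}\Big) = q(n),
\]
which establishes existence of the decomposition. For uniqueness, suppose $\widehat q_1, \widehat q_2$ both represent $q$; then $\sum_{k} (\widehat q_1(k) - \widehat q_2(k))\, e^{2\pi i k n/N} = 0$ for all $n$. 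Pairing this identity against an arbitrary $h \in \Hil$ reduces it to the scalar statement that the $N$ exponentials $\{e^{2\pi i k \cdot/N}\}_{k=0}^{N-1}$ are linearly independent in $\C^N$ (equivalently, the Vandermonde / DFT matrix is invertible), forcing $\langle \widehat q_1(k) - \widehat q_2(k), h\rangle = 0$ for every $k$ and every $h$, hence $\widehat q_1 = \widehat q_2$.
\end{proof}

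The only thing to be careful about is bookkeeping with the normalization constant $\frac{1}{\sqrt N}$, and making sure the uniqueness argument is genuinely performed in $\Hil$ (by testing against vectors $h$) rather than silently assuming $\Hil = \C$; there is no real obstacle, since finiteness of $\Z_N$ removes any analytic difficulty and the core fact is just invertibility of the discrete Fourier matrix.
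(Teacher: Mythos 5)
Your proof is correct, and it takes a somewhat different route from the paper. The paper defines the coefficients \emph{weakly}: for each $k$ it introduces the bounded linear functional $L_k(v)=\sum_{s=0}^{N-1}\langle v,q(s)\rangle e^{-2\pi i sk/N}$, obtains $\widehat q(k)$ from the Riesz representation theorem, and then transfers the scalar DFT of each scalar sequence $n\mapsto\langle v,q(n)\rangle$ back to $\Hil$ by testing against all $v$. You instead write the analysis formula explicitly, $\widehat q(k)=\tfrac{1}{\sqrt N}\sum_n q(n)e^{-2\pi i kn/N}$ (which is exactly the paper's formula \eqref{w-per-A}), and verify inversion directly from the orthogonality relation $\sum_n e^{2\pi i(k-k')n/N}=N\delta_{k,k'}$; this avoids Riesz representation altogether, which is indeed unnecessary here since $\Z_N$ is finite and the coefficient is just a finite $\Hil$-valued linear combination. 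You also prove uniqueness explicitly (by pairing against arbitrary $h\in\Hil$ and invoking invertibility of the DFT matrix), which the paper's proof does not spell out even though the statement asserts it, and you keep the $\tfrac{1}{\sqrt N}$ normalization consistent, whereas the paper's proof drops it in the final line. Two cosmetic points: your display $\bigl\langle \sum_n e_k(n)h,\ \sum_n e_{k'}(n)h'\bigr\rangle$ should be read as the inner product in the function space $\ell^2(\Z_N,\Hil)$ of the functions $n\mapsto e_k(n)h$ and $n\mapsto e_{k'}(n)h'$, not as an inner product of two vectors in $\Hil$, so the notation deserves a word of clarification; and the opening remark that these functions ``span'' the function space is not needed for the argument (and, as phrased, there are more than $N$ of them when $\dim\Hil>1$). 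Neither affects correctness.
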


\begin{proof}

For $k\in \Z_N$ and $v\in\Hil$  define, $L_k(v)  = \sum_{s=0}^{N-1} \langle v,q(s)\rangle\,  e^{-2\pi i s k/N}$.
Note that  $L_k:\Hil\rightarrow \C$ is a linear functional,
and is bounded since
$$|L_k(v)| \leq \sum_{s=0}^{N-1} |\langle v,q(s)\rangle \,| \leq   (\sum_{s=0}^{N-1}\|q(s)\|) \;\|v\|.$$ 

So, using Riesz theorem, for each $k \in \Z_N$ there exists $\widehat{q}(k)\in\Hil$ such that $L_k(v)=\langle v,\widehat{q}(k)\rangle \,$ for all $v \in \Hil.$

Now since $n \mapsto \langle v, q(n)\rangle$ is a scalar function it has a discrete Fourier decomposition, thus\begin{align*}
\langle v, q(n)\rangle \, = & \sum_{k=0}^{N-1} \left(\sum_{s=0}^{N-1} \langle v,q(s)\rangle \,  e^{-2\pi i s k/N} \right) \,e^{2\pi i k n/N} \\
= & \sum_{k=0}^{N-1} L_k(v) \,e^{2\pi i k n/N}
=  \sum_{k=0}^{N-1} \langle v,\widehat{q}(k)\rangle \,  e^{2\pi i k n/N},
\end{align*}
for every $v \in \Hil$.
Thus, $ q(n)=\sum_{k=0}^{N-1} \widehat{q}(k)  e^{2\pi i k n/N}. $

\end{proof}


The unitary transformation  
$$\F_N: \ell^2(\Z_N,\Hil) \rightarrow \ell^2(\Z_N,\Hil)$$

defined by  $q \longmapsto \widehat{q}$,  is called the 
{\em Fourier transform}.
 
Thus, we have the formulae:
\begin{equation}\label{w-per-A}  q(n)=\frac{1}{\sqrt{N}}\sum_{k=0}^{N-1} \widehat{q}(k) \, e^{2\pi i k n/N}, \qquad
 \widehat{q}(k)=\frac{1}{\sqrt{N}}\sum_{n=0}^{N-1} q(n) \, e^{-2\pi i n k/N}.
 \end{equation}


The Fourier transform can equivalently be defined component-wise:

Fix an orthonormal basis $\{v_k\}_{k\in I}$ of $\Hil.$
Given $q: \Z_N \rightarrow \Hil$  define for each $k\in I,\; q_k:\Z_N \rightarrow \C\;$ by
$q_k(j) =\langle q(j), v_k \rangle, \;\forall j\in \Z_N.$

Then, we have $q(j) =\sum_{k\in I} q_k(j) \,v_k,\; j\in \Z_N$.
Now, each $q_k$ has a scalar discrete Fourier decomposition.
i.e. $q_k(j) = \frac{1}{\sqrt{N}} \sum_{s=0}^{N-1} \widehat{q_k}(s) \; e^{2\pi i j s/N}$.

We have for $j\in \Z_N$,
\begin{align*}
q(j) = &\sum_{k\in I} q_k(j) \,v_k = \sum_{k\in I} \langle q(j), v_k \rangle \,v_k\\
=&\sum_{k\in I}(\frac{1}{\sqrt{N}}\sum_{s=0}^{N-1} \widehat{q_k}(s) \,e^{2\pi i j s/N}) \,v_k 
=\frac{1}{\sqrt{N}} \sum_{s=0}^{N-1} \left(\sum_{k\in I} \widehat{q_k}(s) \,v_k\right)\,e^{2\pi i j s/N}
\end{align*}

Now, using the unicity of the decomposition we conclude that 
$$ \sum_{k\in I} \widehat{q_k}(s) \,e_k =\widehat{q}(s).$$

The transformation $\F_N$ can be put in matrix form. Let $F_N$  be the matrix $F_N =\{\frac{1}{\sqrt{N}} e^{-2\pi i k s/N} \}_{k,s=0,\dots, N-1}$.
Thus $F_N$ is an unitary matrix with $F_N^* =\{\frac{1}{\sqrt{N}} e^{2\pi i sk/N} \}_{s,k=0,\dots, N-1}$.
Now if we write, for $q$,\,$\widehat{q} \in \Hil^N$,\; ${q^T}=(q(0),\dots,q(N-1))$ and $\widehat{{q}}^T = (\widehat{q}(0),\dots,\widehat{q}(N-1))$ then 
\begin{equation} \label {FM}
\widehat{{q}}=F_N\, {q} \;\text{ and }\; F_N^*\,\widehat{{q}}= {q}.
\end{equation}


\section{The subspace $\M$} \label {SubspaceM}
In this section, we show that the space $\M$ in Definition~\ref{SpM} is a closed subspace of  $\mathcal{B}(\ell^2, \ell^\infty)$ (see Definition~\ref{Bsp}), and hence  $\M$ is also a Banach space. In addition, 
%
%
%
we prove an important property of the space $\M$ and show that it is a natural domain for the reconstruction operator $R$.

The first property  is stated  in the following Lemma.
\begin{lem} \label {CS} $\M$ is a closed subspace of $\mathcal{B}(\ell^2, \ell^\infty)$.
\end{lem}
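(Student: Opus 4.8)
The plan is to show that $\M$ is a linear subspace of $\mathcal{B}(\ell^2,\ell^\infty)$ and that it is closed in the $\|\cdot\|_{\ell^2\to\ell^\infty}$ norm; since $\mathcal{B}(\ell^2,\ell^\infty)$ is a Banach space (stated after Definition~\ref{Bsp}), this gives that $\M$ is itself a Banach space.

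\textbf{Linearity.} First I would check that $\M$ is closed under addition and scalar multiplication. Given $D,D'\in\M$ with limiting rows $t_s$ and $t_s'$ respectively (i.e.\ $\|r_{Nk+s}-t_s\|_{\ell^2}\to 0$ and $\|r'_{Nk+s}-t_s'\|_{\ell^2}\to 0$ for each $s=1,\dots,N$), the rows of $D+D'$ are $r_{Nk+s}+r'_{Nk+s}$ and by the triangle inequality in $\ell^2$ these converge to $t_s+t_s'\in\ell^2$; similarly for $\lambda D$ the limiting rows are $\lambda t_s$. Hence $\M$ is a subspace. (Here one uses implicitly that the indexing $i\mapsto i=Nk+s$ with $1\le s\le N$ just partitions the rows into $N$ residue classes.)

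\textbf{Closedness.} This is the substantive part. Suppose $D^{(m)}\to D$ in $\mathcal{B}(\ell^2,\ell^\infty)$ with each $D^{(m)}\in\M$; I must produce the limiting rows $t_1,\dots,t_N\in\ell^2$ for $D$. Fix a residue class $s$. Let $t_s^{(m)}\in\ell^2$ be the limit of the $s$-rows of $D^{(m)}$. The key estimate is that for any two row indices of the form $Nk+s$,
\[
\|r^{(m)}_{Nk+s}-r^{(m')}_{Nk+s}\|_{\ell^2}\le \|D^{(m)}-D^{(m')}\|_{\ell^2\to\ell^\infty}
\]
uniformly in $k$, because $\|D^{(m)}-D^{(m')}\|_{\ell^2\to\ell^\infty}=\sup_i\|r^{(m)}_i-r^{(m')}_i\|_{\ell^2}$. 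Letting $k\to\infty$ gives $\|t_s^{(m)}-t_s^{(m')}\|_{\ell^2}\le\|D^{(m)}-D^{(m')}\|_{\ell^2\to\ell^\infty}$, so $(t_s^{(m)})_m$ is Cauchy in $\ell^2$; call its limit $t_s$. It remains to show $\|r_{Nk+s}-t_s\|_{\ell^2}\to 0$ as $k\to\infty$, where $r_i$ are the rows of $D$. The standard $3\varepsilon$ argument does this: estimate
\[
\|r_{Nk+s}-t_s\|_{\ell^2}\le\|r_{Nk+s}-r^{(m)}_{Nk+s}\|_{\ell^2}+\|r^{(m)}_{Nk+s}-t_s^{(m)}\|_{\ell^2}+\|t_s^{(m)}-t_s\|_{\ell^2},
\]
choose $m$ large so the first and third terms are $<\varepsilon/3$ (using $D^{(m)}\to D$ in norm and $t_s^{(m)}\to t_s$ in $\ell^2$), then choose $k$ large so the middle term is $<\varepsilon/3$ (using $D^{(m)}\in\M$). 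This shows $D\in\M$.

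\textbf{Main obstacle.} The only real subtlety is the uniform-in-$k$ control of the row differences, which is precisely what the $\ell^2\to\ell^\infty$ operator norm furnishes; once one recognizes that $\|D-D'\|_{\ell^2\to\ell^\infty}$ dominates \emph{every} single row difference in $\ell^2$, the Cauchy-ness of the $(t_s^{(m)})_m$ and the interchange of the two limits $m\to\infty$, $k\to\infty$ go through by the routine $3\varepsilon$ estimate. I would present the closedness argument for a fixed residue class $s$ and remark that the same works for each $s=1,\dots,N$.
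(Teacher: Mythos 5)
Your proposal is correct and follows essentially the same route as the paper: extract the limiting rows $t_s^{(m)}$, show they form a Cauchy sequence in $\ell^2$ via the uniform row-difference bound $\sup_i\|r_i^{(m)}-r_i^{(m')}\|_{\ell^2}\le\|D^{(m)}-D^{(m')}\|_{\ell^2\to\ell^\infty}$, and conclude with the standard $3\varepsilon$ estimate that $\|r_{Nk+s}-t_s\|_{\ell^2}\to0$. Your passage to the limit $k\to\infty$ to get Cauchyness of $(t_s^{(m)})_m$ is a slightly tidier version of the paper's three-term triangle inequality, and your explicit linearity check is a harmless addition the paper leaves implicit.
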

\begin {proof}
Assume that a sequence $D^{(n)}=[d_{i,j}^{(n)}] \in \M \subset \mathcal{B}(\ell^2, \ell^\infty) $ converges to $D=[d_{i,j}]$, i.e., $\|D^{(n)}-D\|_{\ell^2\to \ell^\infty}\to 0$ as $n\to \infty$. Let $r_k(s)$ and $r^{(n)}_k(s)$ be defined as the rows $r_k(s)=\{d_{Nk+s,j}\}_j$ and $r^{(n)}_k(s)=\{d^{(n)}_{Nk+s,j}\}_j$. Since $D^{(n)}\in \M$, we have that $\lim_{k\to \infty} \|r^{(n)}_{Nk+s} - t^{(n)}_s\|_{\ell^2} = 0$.  To simplify notation, we will drop the script $s$ for the remainder of the proof. We first Show that the sequence $\{t^{(n)}\}_n$ is a Cauchy sequence. 
Since $D^{(n)}$ is convergent in $\M$, we have $\|r_k^{(m)}-r_l^{(n)}\|_{\ell^2}\le \|D^{(m)}-D^{(n)}\|_{\ell^2\to \ell^\infty}$. Thus, given $\varepsilon >0$, we can find an integer $I$ such that 
\[\|r_k^{(m)}-r_l^{(n)}\|_{\ell^2}\le \|D^{(m)}-D^{(n)}\|_{\ell^2\to \ell^\infty} < \varepsilon/3 \text{ for } \quad m,n\ge I,
\]
and get 
\begin{equation*}
\begin{split}
\|t^{(m)}-t^{(n)}\|_{\ell^2}&\le \|t^{(m)}-r_k^{(m)}\|_{\ell^2}+\|t^{(n)}-r_l^{(n)}\|_{\ell^2}+\|r_k^{(m)}-r_l^{(n)}\|_{\ell^2}\\
&\le \|t^{(m)}-r_k^{(m)}\|_{\ell^2}+\|t^{(n)}-r_l^{(n)}\|_{\ell^2}+\varepsilon/3\le \varepsilon \quad \text{ for } \quad m,n\ge I,
\end{split}
\end {equation*} 
where $k,l$ are chosen large enough to obtain the last inequality. Hence, $\{t^{(n)}\}_n$ is a Cauchy sequence. Therefore, there exists $t\in \ell^2$ such that $\lim_{n\to \infty} \|t^{(n)} - t\|_{\ell^2} = 0$. Finally, we show that $\lim_{k\to \infty} \|r_k - t\|_{\ell^2} = 0$. 
\begin{equation}\label {TI}
\|r_k - t\|_{\ell^2}\le \|r_k - r^{(n)}_k\|_{\ell^2}+\|r^{(n)}_k - t^{(n)}\|_{\ell^2}+\|t^{(n)}- t\|_{\ell^2}. 
\end{equation}
For $\varepsilon >0$ choose $I$ such that   $\|r_k-r_k^{(n)}\|_{\ell^2}\le \|D-D^{(n)}\|_{\ell^2\to \ell^\infty}< \varepsilon/3$ and $\|t^{(n)}- t\|_{\ell^2}< \varepsilon/3$ for $n\ge I$. Then, by fixing $n\ge I$, and choosing $I_2\ge I$ large enough so that  $\|r_k - r^{(n)}_k\|_{\ell^2} <\varepsilon/3$, Inequality \eqref {TI} gives
\[
\|r_k - t\|_{\ell^2}<\varepsilon \quad \text { for } k\ge I_2.
\]
\end{proof}
The second important property, stated in the lemma below, shows that the operator defined in \eqref {algR}  is  a bounded operator from $\M$ to $\Ww^N$. 
\begin {lem}\label {LimConv} 
Let $\Hil$ be a separable Hilbert space, and let $\mathcal {G}^s=\{g^s_j\}_{j\geq 1}$, $s=0,\dots,N-1$ be any $N$ Bessel sequences in $\Hil$ with optimal Bessel bound $C_{\mathcal{G}^s}$. Then the mapping
\begin{equation*}
Q:  \M   \longrightarrow   \Hil^N,
\end{equation*}

defined by
$$ Q(D) :=\left[\lim_{i \to \infty} \sum_{j=1}^{\infty} d_{Ni+s,j} g^s_j \right]_{s=1}^N $$

is a well-defined bounded operator.
\end{lem}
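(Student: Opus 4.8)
The plan is to show two things: first, that for each $D \in \M$ and each $s \in \{0,\dots,N-1\}$ the limit $\lim_{i\to\infty}\sum_{j} d_{Ni+s,j}\, g^s_j$ exists in $\Hil$; second, that the resulting map $Q$ is linear and bounded with $\|Q(D)\|_{\Hil^N} \le C\|D\|_{\ell^2\to\ell^\infty}$ for a constant $C$ depending only on the optimal Bessel bounds $C_{\G^s}$. Linearity is immediate from the definition once well-definedness is established, so the substance is existence of the limit plus the norm estimate. I will fix $s$ throughout and write $r_i := r_{Ni+s} = \{d_{Ni+s,j}\}_j \in \ell^2$ for the relevant rows, so that by the definition of $\M$ there is $t_s \in \ell^2$ with $\|r_i - t_s\|_{\ell^2} \to 0$ as $i\to\infty$.

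For the first step, recall that a Bessel sequence $\G^s = \{g^s_j\}$ with bound $C_{\G^s}$ gives a bounded synthesis operator $S_s \colon \ell^2 \to \Hil$, $S_s(c) = \sum_j c_j g^s_j$, with $\|S_s\| \le \sqrt{C_{\G^s}}$. Then $\sum_j d_{Ni+s,j} g^s_j = S_s(r_i)$, and since $S_s$ is continuous and $r_i \to t_s$ in $\ell^2$, we get $S_s(r_i) \to S_s(t_s)$ in $\Hil$; in particular the limit defining the $s$-th coordinate of $Q(D)$ exists and equals $S_s(t_s)$. This shows $Q$ is well defined as a map into $\Hil^N$, and linearity follows because $D \mapsto t_s$ is linear (the limit of a difference of rows is the difference of the limits) and $S_s$ is linear.

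For the boundedness estimate, the point is to bound $\|t_s\|_{\ell^2}$ by $\|D\|_{\ell^2\to\ell^\infty}$. By Definition~\ref{Bsp}, every row of $D$ has $\ell^2$-norm at most $\|D\|_{\ell^2\to\ell^\infty}$; in particular $\|r_i\|_{\ell^2} \le \|D\|_{\ell^2\to\ell^\infty}$ for all $i$, and since $\|\cdot\|_{\ell^2}$ is continuous and $r_i \to t_s$, we get $\|t_s\|_{\ell^2} = \lim_i \|r_i\|_{\ell^2} \le \|D\|_{\ell^2\to\ell^\infty}$. Therefore
\begin{equation*}
\left\|\lim_{i\to\infty}\sum_j d_{Ni+s,j} g^s_j\right\|_{\Hil} = \|S_s(t_s)\|_{\Hil} \le \sqrt{C_{\G^s}}\,\|t_s\|_{\ell^2} \le \sqrt{C_{\G^s}}\,\|D\|_{\ell^2\to\ell^\infty},
\end{equation*}
and summing the squares over $s = 0,\dots,N-1$ gives
\begin{equation*}
\|Q(D)\|_{\Hil^N}^2 = \sum_{s=0}^{N-1}\|S_s(t_s)\|_{\Hil}^2 \le \left(\sum_{s=0}^{N-1} C_{\G^s}\right)\|D\|_{\ell^2\to\ell^\infty}^2,
\end{equation*}
so $Q$ is bounded with $\|Q\| \le \bigl(\sum_{s=0}^{N-1} C_{\G^s}\bigr)^{1/2}$.

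I do not expect a genuine obstacle here; the only things to be careful about are bookkeeping ones — matching the index convention in Definition~\ref{SpM} (rows $r_{Nk+s}$) with the slightly different-looking index $Ni+s$ in the statement, and noting that the optimal Bessel bound $C_{\G^s}$ is exactly $\|S_s\|^2$ so the constant comes out clean. The mild subtlety worth spelling out is \emph{why} the limit exists at all: it is not because the partial sums $\sum_{j\le J} d_{Ni+s,j} g^s_j$ converge in $j$ for fixed $i$ (that is just Bessel) but because the whole sequence of row-sums $S_s(r_i)$ converges in $i$, which is precisely what the defining property of $\M$ (convergence of the rows $r_{Nk+s}$ to $t_s$ in $\ell^2$) delivers once combined with continuity of $S_s$.
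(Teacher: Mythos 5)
Your argument is correct and is essentially the paper's own proof: both use the defining property of $\M$ (rows $r_{Nk+s}\to t_s$ in $\ell^2$) together with the boundedness of the Bessel synthesis operator ($\|S_s\|\le\sqrt{C_{\G^s}}$) to identify $[Q(D)]_s = S_s(t_s)$ and then bound $\|t_s\|_{\ell^2}\le\|D\|_{\ell^2\to\ell^\infty}$. The only difference is cosmetic bookkeeping in the final constant (you sum the squares over $s$, the paper takes a maximum), and if anything your version states the norm chain $\|S_s(t_s)\|\le\sqrt{C_{\G^s}}\|t_s\|\le\sqrt{C_{\G^s}}\|D\|_{\ell^2\to\ell^\infty}$ more cleanly than the paper does.
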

\begin{proof} As in the proof of he previous lemma we let $r_k(s)=\{d_{Nk+s,j}\}_j$  be the row $Nk+s$ of $D \in \M$.
Since $D\in \M$,  there exists a sequence $t_s\in \ell^2$ such that  

\begin{equation}\label{eqb21}\lim_{k\to \infty} \|r_k(s) - t_s\|_{\ell^2} = 0.\end{equation}
 Furthermore, since $\mathcal{G}^s=\{g^s_j\}_{j\geq 1}$ is a Bessel sequence in $\Hil$ with bound $ C_{\mathcal G}^s$, then 

\begin{equation}\label{eqb22}\left\|\sum\limits_{j\ge1} (r_k(s))(j)g^s_j-t_s(j)g^s_j\right\|_\Hil\le\sqrt{ C_{\mathcal G}^s}\,\|r_k(s) - t_s\|_{\ell^2}.
\end{equation}

Thus, using \eqref{eqb21} and \eqref{eqb22}, for each $s=0, \dots, N-1$, we have that 
$$\big[Q(D)\big]_s=\lim_{k\to \infty}\sum\limits_{j\ge1} (r_k(s))(j)g^s_j=\sum\limits_{j\ge1}t_s(j)g^s_j.$$ 

Moreover, since $\|r_k(s)\|_{\ell^2}\le \|D\|_{\ell^2\to \ell^\infty}$, we have $\|t_s\|_{\ell^2}\le \|D\|_{\ell^2\to \ell^\infty}$ and we obtain
$$\left\|\big[Q(D)\big]_s\right\|_\Hil\le \|t_s\|_{\ell^2}\le \sqrt{ C_{\mathcal G^s}} \|D\|_{\ell^2\to \ell^\infty} .$$
Since the inequality above holds for each $s=0,\dots,N-1$, taking $C = \max_s\{\sqrt{ C_{\mathcal G^s}}\}$ we have
$$ \|{Q}(D)\|_{\Hil^N} \leq C \|D\|_{\ell^2\to \ell^\infty}.$$
\end{proof}
 
\noindent
%
%
%
\bibliographystyle{siam}

\end{document}